\newtheorem{thm}{Theorem}[section]
\newtheorem{cor}[thm]{Corollary}
\newtheorem{lem}[thm]{Lemma}
\newtheorem{exa}[thm]{Example}
\theoremstyle{definition}
\newcommand{\scr}[1]{\mathscr #1}
\definecolor{wco}{rgb}{0.5,0.2,0.3}
\numberwithin{equation}{section} \theoremstyle{remark}
\newcommand{\ua}{\uparrow}
\title{{\bf  Functional Inequalities for Convolution Probability  Measures}\footnote{Supported in
 part by  NNSFC(11131003, 11431014, 11201073), the 985-project and the Laboratory of Mathematical and  Complex Systems.}}
\author{
{\bf     Feng-Yu Wang$^{a), c)}$  and Jian Wang$^{b)}$}\\
\footnotesize{$^{a)}$School of Mathematical Sciences,
Beijing Normal
University, Beijing 100875, China}\\
\footnotesize{$^{b)}$School of Mathematics and Computer Science,
Fujian Normal University, Fuzhou 350007, China}\\
 \footnotesize{$^{c)}$Department of Mathematics,
Swansea University, Singleton Park, SA2 8PP, United Kingdom}\\
\footnotesize{  wangfy@bnu.edu.cn,
F.-Y.Wang@swansea.ac.uk,
 jianwang@fjnu.edu.cn}}
\begin{document}
\allowdisplaybreaks
\def\R{\mathbb R}  \def\ff{\frac} \def\ss{\sqrt} \def\B{\mathbf
B}
\def\N{\mathbb N} \def\kk{\kappa} \def\m{{\bf m}}
\def\ee{\varepsilon}
\def\dd{\delta} \def\DD{\Delta} \def\vv{\varepsilon} \def\rr{\rho}
\def\<{\langle} \def\>{\rangle} \def\GG{\Gamma} \def\gg{\gamma}
  \def\nn{\nabla} \def\pp{\partial} \def\E{\scr E}
\def\d{\text{\rm{d}}} \def\bb{\beta} \def\aa{\alpha} \def\D{\scr D}
  \def\si{\sigma} \def\ess{\text{\rm{ess}}}
\def\beg{\begin} \def\beq{\begin{equation}}  \def\F{\scr F}
\def\Ric{\text{\rm{Ric}}} \def\Hess{\text{\rm{Hess}}}
\def\e{\text{\rm{e}}} \def\ua{\underline a} \def\OO{\Omega}  \def\oo{\omega}
 \def\tt{\tilde} \def\Ric{\text{\rm{Ric}}}
\def\cut{\text{\rm{cut}}} \def\P{\mathbb P} \def\ifn{I_n(f^{\bigotimes n})}
\def\C{\scr C}      \def\aaa{\mathbf{r}}     \def\r{r}
\def\gap{\text{\rm{gap}}} \def\prr{\pi_{{\bf m},\varrho}}  \def\r{\mathbf r}
\def\Z{\mathbb Z} \def\vrr{\varrho} \def\ll{\lambda}
\def\L{\scr L}\def\Tt{\tt} \def\TT{\tt}\def\II{\mathbb I}
\def\i{{\rm in}}\def\Sect{{\rm Sect}}  \def\H{\mathbb H}
\def\M{\scr M}\def\Q{\mathbb Q} \def\texto{\text{o}} \def\LL{\Lambda}
\def\Rank{{\rm Rank}} \def\B{\scr B} \def\i{{\rm i}} \def\HR{\hat{\R}^d}
\def\to{\rightarrow}\def\l{\ell}\def\iint{\int}
\def\EE{\scr E}
\def\A{\scr A}
\def\BB{\scr B}

\maketitle

\begin{abstract} Let $\mu$ and $\nu$ be two probability measures on $\R^d$, where $\mu(\d x)= \frac{\e^{-V(x)}\d x}{\int_{\R^d} \e^{-V(x)}\d x}$ for some $V\in C^1(\R^d)$.  Explicit sufficient conditions on $V$ and   $\nu$   are presented such that
  $\mu*\nu$  satisfies the log-Sobolev,
 Poincar\'e and  super Poincar\'e   inequalities. In particular, if $V(x)=\ll|x|^2$ for some $\ll>0$
and $\nu(\e^{\ll\theta |\cdot|^2})<\infty$ for some $\theta>1$, then $\mu*\nu$ satisfies the log-Sobolev inequality. This improves and extends the recent results  on the log-Sobolev inequality derived in \cite{Z} for   convolutions of the  Gaussian measure and compactly supported probability measures. On
the other hand, it is well known that the log-Sobolev inequality for
$\mu*\nu$   implies $\nu(\e^{\vv|\cdot|^2}) <\infty$ for some
$\vv>0.$

\medskip

\centerline{\textbf{R\'esum\'e}}

\medskip

Soit $\mu$ et $\nu$ deux mesures de probabilit\'e sur $\R^d$, o\`u $\mu(\d x)= \frac{\e^{-V(x)}\d x}{\int_{\R^d} \e^{-V(x)}\d x}$ avec $V\in C^1(\R^d)$. Des conditions explicites suffisantes sur $V$ et $\nu$ sont pr\'esent\'ees telles que $\mu*\nu$ satisfait des in\'egalit\'es de Sobolev logarithmique, de Poincar\'e et de super-Poincar\'e. En particulier, si $V(x)=\ll|x|^2$ pour quelque $\ll>0$ et $\nu(\e^{\ll\theta |\cdot|^2})<\infty$ avec $\theta>1$, alors $\mu*\nu$ satisfait  l'in\'egalit\'e de Sobolev logarithmique. Cela am\'eliore et \'etend des r\'esultats r\'ecents sur l'in\'egalit\'e de Sobolev logarithmique obtenus dans \cite{Z} pour des convolutions de la mesure de Gauss et des mesures de probabilit\'e \`a support compact. D'autre part, il est bien connu que l'in\'egalit\'e de Sobolev logarithmique pour $\mu*\nu$  implique  $\nu(\e^{\vv|\cdot|^2}) <\infty$ pour quelque $\vv>0.$
\end{abstract}

 \noindent
 AMS subject Classification:\  60J75, 47G20, 60G52.   \\
\noindent
 Keywords:  Log-Sobolev inequality, Poincar\'e inequality, super   Poincar\'{e} inequality, convolution.
 \vskip 2cm

\section{Introduction}\label{sec1}

Functional inequalities of Dirichlet forms are powerful tools in characterizing the properties of
 Markov semigroups and their generators, see e.g.\ \cite{Wbook} and references within. To establish functional inequalities for less explicit or less regular probability measures, one regards the measures as perturbations from better ones  satisfying  the underlying functional inequalities. For a probability measure $\mu$ on $\R^d$, the perturbation to  $\mu$ can be made in the following two senses. The first type  perturbation is in the sense of exponential potential: the perturbation  of $\mu$  by a potential $W$ is given by  $\mu_W(\d x):=\ff{\e^{W(x)}\mu(\d x)}{\mu(\e^W)}$, for which functional inequalities have been studied in many papers, see \cite{AS, BLW, CWW} and references within. Another kind of perturbation is in the sense of independent sum of random variables: the perturbation of $\mu$ by a probability measure $\nu$ on $\R^d$ is
 given by their convolution $$(\mu*\nu)(A):= \int_{\R^d} 1_{A}(x+y)\mu(\d x)\nu(\d y).$$ Functional inequalities for the latter case is not yet well investigated, and the study is useful in characterizing distribution properties of random variables under independent perturbations, see e.g. \cite[Section 3]{Z} for an application in the study of random matrices.

In general, let $\mu$ and $\nu$ be two probability measures on $\R^d$. A straightforward result on functional inequalities of $\mu*\nu$ can be derived from the sub-additivity property; that is, if both $\mu$ and $\nu$ satisfy a type of functional inequality, $\mu*\nu$ will satisfy the same type inequality. In this paper, we will consider the Poincar\'e inequality and the super Poincar\'e inequality. We say that a probability measure $\mu$ satisfies the Poincar\'e inequality with constant $C>0$, if
\begin{equation}\label{pin}\mu(f^2)\le C\mu(|\nn f|^2)+\mu(f)^2,\ \ f\in C_b^1(\R^d).\end{equation} We say that $\mu$ satisfies the super Poincar\'e inequality with $\beta: (0,\infty)\to (0,\infty)$, if
\begin{equation}\label{sup-pin}\mu(f^2)\le r \mu(|\nn f|^2) +\beta(r)\mu(|f|)^2,\ \ r>0, f\in C_b^1(\R^d).\end{equation}
It is shown in \cite[Corollary 3.3]{W00a} or \cite[Corollary 1.3]{W00b} that the super Poincar\'e inequality holds with
$\beta(r)=\e^{c/r}$ for some constant $c>0$ if and only if
  the following Gross log-Sobolev inequality  (see \cite{G}) holds for some constant $C>0$:
\begin{equation}\label{log}\mu(f^2\log f^2)\le C\mu(|\nn f|^2),\ \ f\in C_b^1(\R^d), \mu(f^2)=1.\end{equation}

\beg{prp}\label{P1.1} Let $\mu$ and $\nu$ be two probability measures on $\R^d$.\beg{enumerate}
\item[$(1)$] If $\mu$ and $\nu$ satisfy the Poincar\'e $($resp. log-Sobolev$)$ inequality with constants $C_1$ and $C_2>0$ respectively,
 then $\mu*\nu$ satisfies the same inequality with constant $C=C_1+C_2$.
    \item[$(2)$] If $\mu$ and $\nu$ satisfy the super Poincar\'e  inequality with   $\beta_1$ and $\beta_2$ respectively, then $\mu*\nu$ satisfies the super Poincar\'e inequality with   $$\beta(r):=\inf\big\{\beta_1(r_1)\beta_2(r_2): r_1,r_2>0, r_1+ r_2\beta_1(r_1)\le r\big\},\ \ r>0.$$
        \end{enumerate} \end{prp}

Since the proof of this result   is almost trivial by using
functional inequalities for product measures (cf.\  \cite[Corollary
13]{Ch}), we simply omit it. Due to Proposition \ref{P1.1}, in this
paper  the perturbation measure $\nu$ may not satisfy the
Poincar\'e inequality, it is in particular the case if the support
of $\nu$ is disconnected.

\

Recently, when $\mu$ is the Gaussian measure with variance matrix
$\dd I$ for some $\dd>0$, it is proved in  \cite{Z} that $\mu*\nu$
satisfies the log-Sobolev inequality if $\nu$ has a compact support
and  either $d=1$ or $\dd>2R^2d$, where $R$ is the radius of a ball
containing the support of $\nu$, see \cite[Theorem 2 and Theorem
17]{Z}. The first purpose of this paper is to extend this result to
more general $\mu$ and to drop the restriction $\dd>2R^2d$ for high
dimensions. The main tool used in \cite{Z} is the Hardy type
criterion for the log-Sobolev inequality due to \cite{BG}, which is
qualitatively sharp in dimension one. In this paper we will use a
perturbation result of \cite{AS} and  a Lyapunov type criterion
introduced in \cite{CGW}   to derive more general and  better
results. In particular, as a consequence of Corollary \ref{C1.3}
below, we have the following result where the compact support
condition of $\nu$ is relaxed by an exponential integrability
condition.   We would like to indicate that  the exponential
integrability condition $\nu(\e^{\vv |\cdot|^2})<\infty$ for some
$\vv>0$ is also necessary for $\mu*\nu$ to satisfy the log-Sobolev
inequality. Indeed, it is well known that the log-Sobolev inequality
for $\mu*\nu$ implies $(\mu*\nu)(\e^{c |\cdot|^2})<\infty$ for some
$c>0$, so that $\nu(\e^{\vv |\cdot|^2})<\infty$ for $\vv\in (0,c)$.
However, it is not clear whether $``\theta >1$" in the
following result is sharp or not.

\beg{thm}\label{T1.1} Let $V=\ll |\cdot|^2$ for some constant
$\ll>0$, and $\mu(\d x)= \frac{\e^{-V(x)}\d x}{\int_{\R^d} \e^{-V(x)}\d x}$ be a probability measure on $\R^d$. Then for any probability measure $\nu$ on $\R^d$ with
$\nu(\e^{\ll\theta|\cdot|^2})<\infty$ for some constant $\theta>1,$  the log-Sobolev inequality
$$(\mu*\nu)(f^2\log f^2)\le C(\mu*\nu)(|\nn f|^2),\ \ f\in C_b^1(\R^d), (\mu*\nu)(f^2)=1$$
holds for some constant $C>0.$ \end{thm}

According to the above-mentioned   results in \cite{Z}, one may wish to prove that the log-Sobolev inequality is stable
under convolution with compactly supported probability measures; i.e. if $\mu$ satisfies the log-Sobolev inequality, then so does $\mu*\nu$ for a probability measure $\nu$ having compact support. This is however not true, a simple counterexample is that $\mu=\dd_0,$ the Dirac measure at point $0$, which obviously satisfies the log-Sobolev inequality, but   $\mu*\nu=\nu$ does not have to satisfy the log-Sobolev inequality even if $\nu$ is compactly supported. Thus, to ensure that $\mu*\nu$ satisfies the log-Sobolev inequality for any compactly supported probability measure $\nu$, one needs additional assumptions on $\mu$. Moreover, since when $\ll\to\infty$, the Gaussian measure $\mu$ in Theorem \ref{T1.1} converges to $\delta_0$, this counterexample also fits to the assertion of Theorem \ref{T1.1} that for large $\lambda$ we need a stronger concentration condition on $\nu$.

In the remainder of this paper, let $\mu(\d x)=\e^{-V(x)}\d x$ be a
probability measure on $\R^d$ such that $V\in C^1(\R^d)$. For a probability measure $\nu$ on $\R^d$, we define
$$p_\nu(x)= \int_{\R^d} \e^{-V(x-z)}\nu(\d z),\ V_\nu(x)= -\log p_\nu(x), \ \ x\in \R^d.$$ Then
\beq\label{*0} (\mu*\nu)(\d x)= p_\nu(x)\d x = \e^{-V_\nu(x)}\d x.\end{equation}  Moreover, let
$$\nu_x(\d z)= \ff 1 {p_\nu(x)} \e^{-V(x-z)}\nu(\d z),\ \ x\in\R^d.$$
In the following three sections, we will investigate the log-Sobolev inequality, Poincar\'e   and super Poincar\'e inequalities for $\mu*\nu$ respectively.

As a complement to the present paper, Cheng and Zhang investigated the weak Poincar\'e inequality in \cite{chengzhang}
for convolution probability measures, by using the Lyapunov type conditions as we did in Sections 3 and 4 for the Poincar\'e and super Poincar\'e inequalities respectively.

\section{Log-Sobolev Inequality}
In this section we will use two different arguments to study the log-Sobolev inequality for $\mu*\nu$. One is the perturbation argument due to \cite{A,AS}, and the other is the Lyapunov criterion presented in \cite{CGW}.

\subsection{Perturbation Argument}

\beg{thm}\label{T1.2} Assume that the log-Sobolev inequality \eqref{log}
holds for $\mu$ with some constant $C>0$. If $V\in C^1(\R^d)$ such that
$$\Phi_\nu(x):= \int_{\R^d} (\nn \e^{-V})(x-z)\nu(\d z),\ \ x\in\R^d$$ is well-defined and continuous, and  there exists a constant
$\delta>1$ such that
\begin{equation}\label{T1.2.1} \int_{\R^d} \exp \left\{\frac{\delta C}{4}\Big(\int_{\R^d} |\nabla V(x)-\nabla V(x-z)|\nu_x(\d z)\Big)^2\right\}\, \mu (\d x)<\infty,\end{equation}  then  $\mu*\nu$ also
satisfies the log-Sobolev inequality, i.e. for some constant $C'>0$,
$$(\mu*\nu)(f^2\log f^2)\le C'(\mu*\nu)(|\nn f|^2),\ \ f\in C_b^1(\R^d), (\mu*\nu)(f^2)=1.$$
\end{thm}
Obviously, $\Phi_\nu\in C(\R^d;\R^d)$  holds if either $\nu$ has
compact support or $\nn\e^{-V}$ is bounded. Moreover, (\ref{C1.3.1})
below  holds for bounded $\Hess_V$ and compactly supported $\nu$.
So,  the following direct consequence of Theorem \ref{T1.2} improves
the above-mentioned main results in \cite{Z}. Indeed, this corollary
implies Theorem \ref{T1.1}.

\begin{cor}\label{C1.3} Assume that $(\ref{log})$ holds and $\Phi_\nu$ is well defined and continuous.
If $V\in C^2(\R^d)$ with bounded  $\Hess_V$
such that
\begin{equation}\label{C1.3.1} \int_{\R^d} \exp \left\{\frac{\delta C}{4}
\|\Hess_V\|_\infty^2\Big(\int_{\R^d} |z|\nu_x(\d z)\Big)^2\right\}\,
\mu (\d x)<\infty  \end{equation} holds for some constant $\dd>1$,
then $\mu*\nu$ satisfies the log-Sobolev inequality.   \end{cor}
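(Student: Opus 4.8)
The plan is to deduce Corollary \ref{C1.3} directly from Theorem \ref{T1.2} by checking that the hypotheses of the latter are implied by those of the former. Since the corollary already assumes the log-Sobolev inequality \eqref{log} for $\mu$ with constant $C>0$ and that $\Phi_\nu$ is well defined and continuous, the only thing left to verify is that condition \eqref{C1.3.1} forces condition \eqref{T1.2.1} to hold, with the \emph{same} constant $C$ and the \emph{same} $\dd>1$. Thus the entire content of the corollary reduces to a pointwise comparison of the two integrands.

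The key step is a mean-value estimate for $\nn V$. Since $V\in C^2(\R^d)$, for every $x,z\in\R^d$ we may write
$$\nn V(x)-\nn V(x-z)=\int_0^1 \Hess_V\big(x-(1-s)z\big)\,z\,\d s,$$
whence, using the uniform bound on the Hessian,
$$|\nn V(x)-\nn V(x-z)|\le \|\Hess_V\|_\infty\,|z|,\qquad x,z\in\R^d.$$
Integrating this inequality against the probability measure $\nu_x(\d z)$ gives
$$\int_{\R^d}|\nn V(x)-\nn V(x-z)|\,\nu_x(\d z)\le \|\Hess_V\|_\infty\int_{\R^d}|z|\,\nu_x(\d z),\qquad x\in\R^d.$$
Squaring the last display and multiplying by $\dd C/4>0$ preserves the inequality, and since $t\mapsto \e^{t}$ is increasing, the integrand appearing in \eqref{T1.2.1} is dominated pointwise by the integrand appearing in \eqref{C1.3.1}. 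Integrating against $\mu$ and invoking \eqref{C1.3.1} therefore yields \eqref{T1.2.1}. With all hypotheses of Theorem \ref{T1.2} now verified, its conclusion gives the log-Sobolev inequality for $\mu*\nu$.

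As for difficulties: there is essentially no obstacle here, since all the analytic work --- the perturbation/stability argument that actually produces the log-Sobolev inequality --- is already carried out in Theorem \ref{T1.2}. The corollary merely replaces the somewhat implicit quantity $\int_{\R^d}|\nn V(x)-\nn V(x-z)|\,\nu_x(\d z)$ by the more transparent $\|\Hess_V\|_\infty\int_{\R^d}|z|\,\nu_x(\d z)$, a substitution justified by the elementary Hessian bound above. The only point requiring (minor) care is to confirm that the same constants $C$ and $\dd$ can be used throughout, so that no readjustment of \eqref{C1.3.1} is needed before applying Theorem \ref{T1.2}.
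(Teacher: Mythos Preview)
Your proposal is correct and is exactly the approach intended by the paper, which simply declares Corollary \ref{C1.3} a ``direct consequence of Theorem \ref{T1.2}'' without writing out the details. The pointwise Hessian bound $|\nn V(x)-\nn V(x-z)|\le \|\Hess_V\|_\infty |z|$ is precisely the step that reduces \eqref{T1.2.1} to \eqref{C1.3.1}, and you have handled the constants correctly.
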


Before presenting the proof of  Theorem \ref{T1.2}, we first prove Theorem \ref{T1.1}
using Corollary \ref{C1.3}.

 \beg{proof}[Proof of Theorem \ref{T1.1}]  Let $Z=\int_{\R^d}\e^{-\ll|x|^2}\d x.$  Since, in the framework of Corollary \ref{C1.3}, $V(x)=\ll |x|^2+\log Z$, we
have $\|\Hess_V\|_\infty^2=4\ll^2$ and \eqref{log} holds for $C=\ff
1 \ll$. Moreover, since $\theta>1$, there exists a constant $\vv\in (0,1)$ such that $\dd:= \theta-\ff\vv{1-\vv}>1.$ So, by the Jensen inequality \beg{equation}\label{L1}\beg{split} I&:=
\int_{\R^d}\exp\bigg\{\ff{\dd C}4
\|\Hess_V\|_\infty^2\nu_x(|\cdot|)^2\bigg\}\mu(\d x)\le \int_{\R^d} \e^{\dd \ll \nu_x(|\cdot|^2)}\mu(\d x)\\
&\le\int_{\R^d\times\R^d} \e^{\dd\ll|z|^2}\nu_x(\d z)\mu(\d x)
=\int_{\R^d} \ff{\int_{\R^d} \e^{\ll\dd |z|^2-\ll|x-z|^2}\nu(\d
z)}{\int_{\R^d} \e^{-\ll|x-z|^2}\nu(\d z)}\,\mu(\d x).\end{split}\end{equation}
Take   $R>0$ such that $\nu(B(0,R))\ge \ff 1 2$. We have
$$\int_{\R^d} \e^{-\ll |x-z|^2}\nu(\d z) \ge \int_{B(0,R)} \e^{-\ll R^2-2\ll R|x|-\ll |x|^2}\nu(\d z)\ge \ff 1 2 \e^{-\ll R^2-2\ll R|x|-\ll |x|^2}.$$ Moreover,
for the above $\vv\in (0,1)$ we have
$$-|x-z|^2 \le 2|x|\cdot |z|-|x|^2-|z|^2\le -\vv |x|^2 +\ff\vv{1-\vv} |z|^2.$$ Combining this with \eqref{L1}, we obtain
\beg{equation*}\beg{split} I&\le \ff {2\e^{\ll R^2}} Z \int_{\R^d\times\R^d} \e^{\ll\dd|z|^2-\ll|x-z|^2+2\ll R|x|}\nu(\d z)\d x\\
&\le  \ff {2\e^{\ll R^2}}Z \int_{\R^d\times\R^d} \e^{\ll\dd|z|^2-\ll \vv |x|^2 +\ff{\ll \vv}{1-\vv}|z|^2+2\ll R|x|}  \d x\nu(\d z)\\
&= \ff {2\e^{\ll R^2}}Z \int_{\R^d\times\R^d} \e^{\ll\theta|z|^2-\ll \vv |x|^2 +2\ll R|x|}  \d x\nu(\d z)<\infty.\end{split}\end{equation*}
  Then the proof is finished by Corollary \ref{C1.3}.
\end{proof}

To prove Theorem \ref{T1.2},
we introduce the following perturbation result  due to
\cite[Lemma 3.1]{AS} and \cite[Lemma 4.1]{A}.

\beg{lem}\label{L2.2}Assume that the probability measure $\mu(\d x)=\e^{-V(x)}\d x$ satisfies the
log-Sobolev inequality \eqref{log} with some constant $C>0$.   Let $\mu_{V_0}(\d
x)=\e^{-V_0(x)}\d x$ be a probability measure on $\R^d$. If $F:=\frac{1}{2}(V-V_0)\in C^1(\R^d)$ such that
\begin{equation}\label{l2.1} \int_{\R^d}\exp (\delta C|\nabla F|^2)\d\mu
<\infty,\end{equation} holds for some constant $\dd>1$, then the
defective log-Sobolev inequality
\begin{equation}\label{d-log}\mu_{V_0}(f^2\log f^2)\le C_1\mu_{V_0}(|\nn f|^2)+C_2,\ \ f\in C_b^1(\R^d), \mu_{V_0}(f^2)=1,\end{equation}holds for some constants $C_1,C_2>0.$
  \end{lem}

 \beg{proof}[Proof of Theorem \ref{T1.2}] Since by (\ref{*0}) we have $(\mu*\nu)(\d x)=\e^{-V_\nu(x)}\d x$, to apply Lemma \ref{L2.2} we take $V_0=V_\nu$, so that  $$F(x)=\frac{1}{2}(V(x)-V_0(x))=\frac{1}{2}\log \int_{\R^d} \e^{V(x)-V(x-z)}\nu(\d z).$$ Since $\Phi_\nu$ is locally bounded, for any $x\in \R^d$ we have
 $$\lim_{y\to 0} (p_\nu(x+y)-p_\nu(x))=\lim_{y\to 0} \int_0^1\<y,\Phi_\nu(x+sy)\>\d s=0.$$ So, $p_\nu\in C(\R^d)$. Then the continuity of $\Phi_\nu$ implies that
 $$\Psi(x):= \int_{\R^d}(\nn V)(x-z)\nu_x(\d z)=-\ff {\Phi_\nu(x)} {p_\nu(x)}$$ is continuous in $x$ as well.
 Therefore, for any $x,v\in\R^d$,
\beg{equation*}\beg{split} \lim_{\vv\downarrow 0} \ff{F(x+\vv v)-F(x)}\vv &= \lim_{\vv\downarrow 0} \ff 1 {2\vv} \int_0^\vv \<v, \nn V(x+sv)-\Psi(x+sv)\>\d s\\
& =\ff 1 2 \<v,\nn V(x)-\Psi(x)\>.\end{split}\end{equation*} Thus,  by the continuity of $\Psi$ and $\nn V$ we conclude that $F\in C^1(\R^d)$ and
$$ |\nabla F(x)|^2=\ff 1 4 |\nn V(x)-\Psi(x)|^2 \le \frac{1}{4}\Big(\int_{\R^d} |\nabla V(x)-\nabla V(x-z)|\nu_x(\d z)\Big)^2.$$ Combining this with (\ref{T1.2.1}), we are able to apply Lemma \ref{L2.2} to derive the defective log-Sobolev inequality for $\mu*\nu$. Moreover,   the form
  $$\E(f,g):= \int_{\R^d} \<\nn f,\nn g\> \d(\mu*\nu),\ \ f,g\in C_b^1(\R^d)$$ is closable in $L^2(\mu*\nu)$,
   and its closure is a symmetric, conservative, irreducible Dirichlet form. Thus, according to
   \cite[Corollary 1.3]{W13} (see also \cite[Theorem 1]{M2}), the defective log-Sobolev inequality implies the desired log-Sobolev inequality. Then the proof is finished.
 \end{proof}

To see that Corollary \ref{C1.3} has a broad range of application
beyond  \cite[Theorem 2]{Z} and Proposition \ref{P1.1}(1) for the
log-Sobolev inequality, we present below an example where the support of $\nu$ is
unbounded and disconnected.

\begin{exa}\label{E1}
Let $d=1$, $V(x)=\ff 1 2 \log\pi+x^2$ and
$$\nu(\d z)= \ff 1 \gg \sum_{i\in\Z} \e^{-\lambda i^2}\dd_i(\d z),\ \
\gg:=\sum_{i\in\Z}\e^{-\lambda i^2},$$ where $\dd_i$ is the Dirac measure at
point $i$ and $\ll>0$. Then $\mu*\nu$ satisfies the log-Sobolev inequality.
\end{exa}

\beg{proof}  For the present $V$ it is well known from \cite{G} that  the log-Sobolev
inequality \eqref{log} holds with $C=1$. On the other hand, it is
easy to see that for any $i\in \Z$, $x\in\R$ and $\ll >0$, we have
\beq\label{**1} |x-i|^2+\ll i^2= (1+\ll)\Big(i-\ff x{\ll+1}\Big)^2 +\ff{\ll
x^2}{1+\ll}.\end{equation}
Let $\tt p(x)= \sum_{i\in\Z}\e^{-(1+\ll)(i-x/(1+\ll))^2}.$ Then
\beq\label{**2} \nu_x(\d z)= \ff 1 {\gg(x)} \sum_{i\in\Z} \e^{-|x-i|^2-\ll i^2}\dd_i(\d z)=
 \ff  1 {\tt p(x)}\sum_{i\in \Z}\e^{-(1+\ll)(i-x/(1+\ll))^2}\dd_i(\d z),\end{equation}where $\gg(x)= \sum_{i\in\Z} \e^{-|x-i|^2-\ll i^2}.$
So,
\beg{equation*}\beg{split}\int_{\R^d} |z|\nu_x(\d z)&= \ff 1 {\tt p(x)}
\sum_{i\in\Z} |i|\e^{-(1+\ll)(i-x/(1+\ll))^2}\\
&\le \ff{|x|}{1+\ll} +\ff 1 {\tt p(x)}
\sum_{i\in\Z} \Big|i-\ff{x}{1+\ll}\Big|\e^{-(1+\ll)(i-x/(1+\ll))^2}\\
&\le \ff{|x|}{1+\ll}+c,\ \ x\in \R\end{split}\end{equation*}
holds for
\beq\label{NB}c:= \sup_{x\in [0, 1+\ll]} \ff 1{\tt p(x)} \sum_{i\in\Z} \Big|i-\ff{x}{1+\ll}\Big|\e^{-(1+\ll)(i-x/(1+\ll))^2}<\infty\end{equation} since the underlying function   is periodic with a period $[0,1+\ll]$.    Noting that $C=1$ and $\|\Hess_V\|^2 =4$, we conclude from this that   condition \eqref{C1.3.1} holds for $\dd\in (1,1+\ll).$ Then the proof is finished by Corollary \ref{C1.3}.
\end{proof}

Finally, the following example shows that Theorem \ref{T1.2} may also work  for unbounded  $\Hess_V$.

\begin{exa}\label{E2}
Let $V(x)=c +|x|^p$ with $p\in [2,4)$ for some constant $c$ such that $\mu(\d x):=\e^{-V(x)}\d x$ is a probability measure on $\R^d$. Let $\nu$ be a probability measure on $\R^d$ with compact support. Then $\mu*\nu$ satisfies the log-Sobolev inequality.
\end{exa}

\beg{proof}  Since $p\ge 2$, we have $V\in C^2(\R^d)$ and
$\Phi_\nu\in C(\R^d, \R^d).$ Let $R=\sup\{|z|: z\in {\rm
supp}\,\nu\}.$ Then
$$\int_{\R^d} |\nabla V(x)-\nabla  V(x-z)|\nu_x(\d z) \le R\sup_{z\in B(x,R)} |\Hess_{V}(z)|\le C(R)(1+|x|^{p-2})$$ holds for some constant $C(R)>0$ and all $x\in\R^d$. Combining this with $2(p-2)<p$ implied by $p<4$, we see that   (\ref{T1.2.1}) holds. Then the proof is finished by Theorem \ref{T1.2}.   \end{proof}

We will see in Remark 4.1 below that the assertion in Example
\ref{E2} remains true for $p\ge 4.$ Indeed, when $p>2$ the super
Poincar\'e inequality presented in Example \ref{E3.7} below is
stronger than the log-Sobolev inequality, see \cite[Corollary
3.3]{W00a}.

\subsection{Lyapunov Criterion}

\beg{thm} \label{T1.6} Assume that $V\in C^2(\R^d)$ with bounded $\Hess_V$ such that
\beq\label{T1.6.1} \Hess_V\ge K I \ \text{ \ outside\ a\ compact\
set}\end{equation} holds for some constant $K>0$. Then $\mu*\nu$
satisfies the log-Sobolev inequality provided the following two
conditions hold: \beg{enumerate}\item[$(C1)$] There exists a
constant $c>0$ such that $$\nu_x(f^2)-\nu_x(f)^2\le c\|\nn
f\|_\infty^2,\ \ f\in C_b^1(\R^d),x\in\R^d.$$
\item[$(C2)$] $\limsup_{|x|\to\infty} \dfrac{\int_{\R^d} |\nn V(-z)|\nu_x(\d z)}{|x|}<K.$\end{enumerate}
\end{thm}

We believe that Theorems \ref{T1.2} and \ref{T1.6} are incomparable, since (\ref{T1.6.1}) is neither necessary for (\ref{log}) to hold, nor provides explicit upper bound of $C$ in (\ref{log}) which is involved in condition (\ref{T1.2.1}) for Theorem \ref{T1.2}. But
   it would be rather complicated to construct proper counterexamples confirming this observation.

The proof of Theorem \ref{T1.6}  is based on
the following Lyapunov type criterion due to \cite[Theorem
1.2]{CGW}.

\beg{lem}[\cite{CGW}]\label{L2.1} Let $\mu_0(\d x)=\e^{-V_0(x)}\d x$
be a probability measure on $\R^d$ for some $V_0\in C^2(\R^d).$ Then
$\mu_0$ satisfies the log-Sobolev inequality provided the following
two conditions hold: \beg{enumerate}\item[{\rm (i)}] There exists a
constant $K_0\in\R$ such that $\Hess_{V_0}\ge K_0I$.
\item[{\rm (ii)}] There exists $W\in C^2(\R^d)$ with $W\ge 1$ such that
$$\DD W(x)-\<\nn V_0,\nn W\>(x)\le (c_1-c_2|x|^2)W(x),\ \ x\in\R^d$$
holds for some constants $c_1,c_2>0$.\end{enumerate}
\end{lem}

\beg{proof}[Proof of Theorem \ref{T1.6}]  By (\ref{*0}) and  Lemma \ref{L2.1}, it suffices to verify conditions
(i) and (ii) for $V_0=V_{\nu}:=-\log p_\nu$.

(a) Proof of (i). By the boundedness of $\Hess_V$ and the condition
(\ref{T1.6.1}), it is to see that $p_\nu\in C^2(\R^d)$ and for any
$X\in \R^d$ with $|X|=1$, we have
\beg{equation}\label{2.3}\Hess_{V_0}(X,X)= \ff 1
{p_\nu^2}\Big((\nn_Xp)^2
-p_\nu\Hess_{p_\nu}(X,X)\Big).\end{equation} Moreover,
$$\nn_Xp_\nu(x)= -p_\nu(x)\int_{\R^d} (\nn_XV(x-z))\nu_x(\d z).$$ Then, letting
$K_1:=\|\Hess_V\|<\infty$, we obtain  \beg{equation*}\beg{split} \Hess_{p_\nu}(X,X)(x)
&=\int_{\R^d}
\Big(|\nn_XV(x-z)|^2-\Hess_V(X,X)(x-z)\Big)\e^{-V(x-z)}\nu(\d z)\\
&\le p_\nu(x) \int_{\R^d} |\nn_XV(x-z)|^2\nu_x(\d z)
+K_1p_\nu(x).\end{split}\end{equation*}Combining these with (\ref{2.3})
and $(C1)$,   we conclude that \beg{equation*}\beg{split}
\Hess_{V_0}(X,X)(x) &\ge -K_1 -\int_{\R^d}
(\nn_XV(x-z))^2\nu_x(\d z)  +\bigg(\int_{\R^d} \nn_XV(x-z)
\nu_x(\d z)\bigg)^2\\
&\ge -K_1-cK_1^2.\end{split}\end{equation*} Thus, (i) holds for
 $K_0=-K_1-cK_1^2$.

 (b) Proof of (ii). Let $W(x)=\e^{\vv |x|^2}$ for some constant $\vv>0.$ Then
\beq\label{2.4}  \ff{\DD W-\<\nn V_0,\nn W\>}{W}(x) = 2d\vv
+4\vv^2|x|^2 -\vv  \int_{\R^d} \<x,\nn V(x-z)\>\nu_x(\d
z).\end{equation} Since $\Hess_V$ is bounded and \eqref{T1.6.1}
holds, we know that  $\int_{\R^d} \<x,\nn V(x-z)\>\nu_x(\d z)$ is
well defined and locally bounded. By (\ref{T1.6.1}), there exists a
constant $r_0>0$ such that $\Hess_V\ge KI$ holds on the set
$\{|z|\ge r_0\}$. So, for $x\in\R^d$ with $|x|>2r_0$,
\beg{equation*}\beg{split} \<\nn V(x-z)-\nn V(-z),x\>
&=|x|\int_0^{|x|}\Hess_V\Big(\ff{x}{|x|},\ff{x}{|x|}\Big)\Big(\ff{rx}{|x|}-z\Big)\d
r\\
&\ge K|x|^2 -K_1|x|\Big|\Big\{r\in [0,|x|]: \Big|\ff{r
x}{|x|}-z\Big|\le r_0\Big\}\Big|\\&\ge K|x|^2 - 2 K_1 r_0
|x|.\end{split}\end{equation*} Combining this with (\ref{2.4}) and
$(C2)$, and noting that  $$ \<x,\nn V(x-z)\>\le\<\nn V(x-z)-\nn
V(-z),x\> +|x|\cdot |\nn V(-z)|,$$ we conclude that there exist  constants
$C_1,C_2>0$ such that $$\ff{\DD W-\<\nn V_0,\nn W\>}{W}(x) \le 2d\vv
+4\vv^2|x|^2-\vv C_1|x|^2 +\vv C_2.$$ Taking $\vv=\ff {C_1}8$, we
prove (ii) for some constants $c_1,c_2>0.$
\end{proof}

Since when $\nu$ has compact support, we have
$$\nu_x(f^2)-\nu_x(f)^2=\int_{\R^d\times\R^d} |f(z)-f(y)|^2\nu_x(\d
z)\nu_x(\d y)\le  R^2\|\nn f\|_\infty^2,$$ where $R:=\sup\{|z-y|:\
z,y\in {\rm supp}\nu\}<\infty$, and
$$\lim_{|x|\to\infty} \ff{\int_{\R^d} |\nn V(-z)|\nu_x(\d z)}{|x|}\le \lim_{|x|\to\infty} \ff{\sup_{{\rm supp}\nu} |\nn
V|}{|x|}=0,$$  The following direct consequence of Theorem
\ref{T1.6}   improves the above mentioned  results in \cite{Z}
as well.

\beg{cor}\label{C1.1} Assume that $V\in C^2(\R^d)$ with bounded $\Hess_V$ such that  $(\ref{T1.6.1})$ holds. Then $\mu*\nu$ satisfies the log-Sobolev inequality for any compactly supported probability measure $\nu$.\end{cor}

To show that Theorem \ref{T1.6} also has a range of application
beyond Corollary \ref{C1.1} and Proposition \ref{P1.1}(1) for the
log-Sobolev inequality, we reprove Example
\ref{E1} by using Theorem \ref{T1.6}.

 \beg{proof}[Proof of Example \ref{E1}   using Theorem \ref{T1.6}] Obviously, (\ref{T1.6.1}) holds for $K=2$.
Let
$$ \tt \nu_{x}=\ff1 {\tt\gg(x) } \sum_{i\in\Z} \e^{-(1+\ll)(i-x)^2}\dd_i,\ \ \tt\gg(x)= \sum_{i\in\Z} \e^{-(1+\ll)(i-x)^2}.$$
By (\ref{**1}) we have $\tt\nu_{x}=\nu_{(1+\ll)x}.$   Thus, we only need to
verify conditions $(C1)$ and $(C2)$ for $\tt\nu_x$ in place of
$\nu_x$.

(a) To prove condition $(C1)$, we make use of a Hardy type
inequality for birth-death processes with Dirichlet boundary
introduced in \cite{M}. Let $x\in \R$ be fixed. For any bounded
function $f$ on $\Z$, let $\tt f(i)= f(i)-f(i_x)$, where
$i_x:=\sup\{i\in\Z: i\le x\}$ is the integer part of $x$. Then
\beg{equation}\label{2.5} \tt\nu_x(f^2)-\tt\nu_x(f)^2\le
\sum_{i=-\infty}^{i_x} \tt f(i)^2\tt\nu_x(i) + \sum_{i=i_x}^\infty
\tt f(i)^2\tt\nu_x(i).\end{equation} It is easy to see that there
exists a constant $c>0$ independent of $x$ such that for any $m\ge
i_x>x-1$,
$$\sum_{i=i_x}^m \e^{(1+\ll)(i-x)^2}\le c\e^{(1+\ll)(m-x)^2},\ \
\sum_{i=m+1}^\infty \e^{-(1+\ll)(i-x)^2}\le c
\e^{-(1+\ll)(m+1-x)^2}.$$ Therefore, \beg{equation*}\beg{split}
&\sup_{m\ge i_x} \Big(\sum_{i=i_x}^m
\e^{(1+\ll)(i-x)^2}\Big)\sum_{i=m+1}^\infty
\e^{-(1+\ll)(i-x)^2}\\
&\le c^2\e^{(1+\ll)\{(m-x)^2-(m+1-x)^2\}}
 = c^2
\e^{(1+\ll)\{2(x-m)-1\}}\le c^2\e^{1+\ll}.\end{split}\end{equation*}
By this and the Hardy inequality (see   \cite[Theorem
1.3.9]{Wbook}), we have
$$\sum_{i=i_x}^\infty \tt f(i)^2\tt\nu_x(i)\le 4 c^2\e^{1+\ll}\sum_{i=i_x}^\infty
(f(i+1)-f(i))^2\tt\nu_x(i).$$ Similarly,
$$\sum_{i=-\infty}^{i_x} \tt f(i)^2\tt\nu_x(i)\le 4
c^2\e^{1+\ll}\sum_{i=-\infty}^{i_x} (f(i-1)-f(i))^2\tt\nu_x(i).$$
Combining these with (\ref{2.5}) we prove $(C1)$ for $\tt\nu_x$ and
some constant $c>0$ (independent of $x\in \R$).

(b) Let $\tt p(x)= \sum_{i\in\Z}\e^{-(1+\ll)(i-x/(1+\ll))^2}.$
 Noting that $\nn V(z)= 2 z$, by (\ref{**2}) we obtain
\beg{equation*}\beg{split}\int_{\R^d}|\nn V(-z)|\,\nu_x(\d z)&= \ff 2 {\tt p(x)}
\sum_{i\in\Z} |i|\e^{-(1+\ll)(i-x/(1+\ll))^2}\\
&\le \ff{2|x|}{1+\ll} +\ff 2 {\tt p(x)}
\sum_{i\in\Z} \Big|i-\ff{x}{1+\ll}\Big|\e^{-(1+\ll)(i-x/(1+\ll))^2}\\
&\le c+ \ff{2|x|}{1+\ll}\end{split}\end{equation*}
for    $c>0$ in \eqref{NB}. Therefore,
$$\limsup_{|x|\to\infty} \ff{\int_{\R^d}|\nn V(-z)|\,\nu_x(\d z)}{|x|}\le \ff 2 {1+\ll} <2=K.$$ Thus, condition $(C2)$ holds.
\end{proof}

At the end of this section, we present the following two remarks for perturbation argument and Lyapunov criteria to deal with convolution probability measures.
\paragraph{Remark 2.1} (1) Both Theorems \ref{T1.2} and \ref{T1.6} are concerned with qualitative conditions ensuring the existence of the log-Sobolev inequality for convolution probability measures.  It would be interesting to derive explicit estimates on the log-Sobolev constant, i.e. the smallest constant such that the log-Sobolev inequality holds.  Recently, by using refining the conditions in Lemma \ref{L2.1},  Zimmermann has estimated the log-Sobolev constant in \cite{zi2014} for the convolution  of a Gaussian measure with a compactly supported measure (see \cite[Theorem 10]{zi2014} for more details). Similar things can be done under the present general framework.  However, as it is well known that estimates derived from perturbation arguments are in general less sharp,   we will not go further in this direction and leave the  quantitative estimates to a forthcoming paper by other means.

(2) As mentioned in Section \ref{sec1}, the convolution of probability measures refers to the sum of independent random variables. So, by induction we may use the Lyapunov criteria to investigate functional inequalities for multi-convolution measures. In this case it is  interesting   to study the behavior  of the optimal constant  (e.g. the log-Sobolev constant) as multiplicity goes to $\infty$. For this we need fine estimates on the constant in terms of the multiplicity, which is related to what we have discussed in Remark 2.1(1).    Of course, for functional inequalities having the sub-additivity property, it is possible to derive multiplicity-free estimates on the optimal constant, see e.g. the recent paper \cite{PLS} for Beckner-type inequalities of convolution measures on the abstract Wiener space.

\section{Poincar\'{e}  inequality}\label{sec3}

In the spirit of the proof of Theorem \ref{T1.6}, in this section we
study the  Poincar\'e
inequality for convolution measures  using the
Lyapunov conditions presented in \cite{BCG, BBCG}. One may also wish to use the following easy to check perturbation result on
 the Poincar\'e inequality corresponding to Lemma \ref{L2.2}.

 \emph{If the probability measure $\mu_V(\d x)=e^{-V(x)}\d x$ satisfies
 the Poincar\'e inequality \eqref{pin} with some constant $C>0$, then for any $V_0\in C^1(\R^d)$ such
 that $\int e^{-V_0(x)}\d x=1$ and
 $C \|\nn (V-V_0)\|_\infty^2<2$, the probability measure $\mu_{V_0}(\d x)=e^{-V_0(x)}\d x$
  satisfies the Poincar\'e inequality \eqref{pin} (with a different constant) as well.}

  Since
 the boundedness condition on $\nn(V-V_0)$ is rather strong (for instance, it excludes Example \ref{E3.3}(1) below for $p>2$), here, and also in the next section for the super Poincar\'e
 inequality, we will  use the Lyapunov criteria rather than  this perturbation result. By combining the following Theorem \ref{T3.1} below with \cite[Theorem 1.4]{BBCG}, one may derive   quantitative estimates on the  Poincar\'{e} constant (or the spectral gap).

\beg{thm}\label{T3.1} Let $\mu(\d x)=\e^{-V(x)}\d x$ be a
probability measure on $\R^d$ and let $\nu$ be a probability measure
on $\R^d$. Assume that $\Phi_\nu$ in Theorem $\ref{T1.2}$ is well-defined and continuous. Then $\mu*\nu$ satisfies the Poincar\'e inequality
\eqref{pin}, if at least one of the following conditions holds:
\beg{enumerate}\item[$(1)$] $V\in C^1(\R^d)$ such that
 $\liminf\limits_{|x|\to\infty} \dfrac{\int_{\R^d} \langle x, \nabla V(x-z)\rangle \nu_x(\d
z)}{|x|}>0.$ \item[$(2)$] $V\in C^2(\R^d)$ such that $\tt\Phi_\nu(x):= \int_{\R^d}(\nn^2 V)(x-z) \nu_x(\d z)$ is well-defined and continuous in $x$, and  there is a constant $\delta\in(0,1)$ such that
$$ \liminf_{|x|\to\infty}  \int_{\R^d}\! \Big(\delta|\nabla V(x-z)|^2-\triangle V(x-z)\Big)
\, \nu_x(\d z)>0.$$
\end{enumerate} \end{thm}

\begin{proof}  Let $L_{\nu}=\Delta-\nabla V_\nu$. According to \cite[Theorem 3.5]{BCG} or \cite[Theorem
1.4]{BBCG}, $(\mu*\nu)(\d x):=\e^{-V_\nu(x)}\d x$ satisfies the
Poincar\'{e} inequality if there exist a   $C^2$-function $W\ge1$ and some
positive constants $\theta, b, R$ such that for all $x\in \R^d$,
\begin{equation}\label{pi-1}L_{\nu}W(x)\le -\theta W(x)+b1_{B(0,R)}(x).\end{equation} In particular, by
\cite[Corollary 1.6]{BBCG},  
if either
 \beq\label{*D1}\liminf_{|x|\to\infty} \ff{\<\nn V_\nu(x),x\>}{|x|}>0,\end{equation} or
  there is a constant $\delta\in (0,1)$ such that
\beq\label{*D2}\liminf\limits_{|x|\to\infty} \Big(\delta |\nabla V_\nu(x)|^2-\DD
V_\nu(x)\Big)>0,\end{equation}
then the inequality \eqref{pi-1} fulfills.

Now, as shown in the proof of Theorem \ref{T1.2} that the continuity of $\Phi_\nu$ implies that $V_\nu\in C^1(\R^d)$   and    $$\<\nn
V_\nu(x),x\>=\int_{\R^d}\<\nabla V(x-z), x\>\nu_x(\d z).$$ Then condition (1) in Theorem \ref{T3.1} implies (\ref{*D1}), and hence the Poincar\'e inequality for $\mu*\nu$.

On the other hand, repeating the argument leading to $F\in C^1(\R^d)$ in the proof of Theorem \ref{T1.2},
we conclude that the continuity of $\Phi_\nu$ and $\tt\Phi_\nu$ implies   $  V_\nu\in C^2(\R^d)$   and
\beg{equation*}\beg{split} &|\nabla V_\nu(x)|^2=\left(\int_{\R^d} \nabla V(x-z)\nu_x(\d z)\right)^2,\\
&\DD V_\nu(x)=|\nabla V_\nu(x)|^2+\int_{\R^d} \big\{\DD V(x-z) -
|\nabla V(x-z)|^2\big\}\nu_x(\d z).\end{split}\end{equation*} Then  for any $\delta\in(0,1)$,
\beg{equation*}\beg{split}\delta |\nabla V_\nu(x)|^2-\Delta V_\nu(x)=&\int_{\R^d} \Big(|\nabla V(x-z)|^2-\DD V(x-z)\Big)\,
\nu_x(\d z)-(1-\delta)|\nabla V_\nu(x)|^2\\
\ge& \int_{\R^d} \Big(\delta|\nabla V(x-z)|^2-\DD V(x-z)\Big)\,
\nu_x(\d z).\end{split}\end{equation*} Combining this with condition (2) in  Theorem \ref{T3.1} we prove (\ref{*D2}), and hence the Poincar\'e inequality for $\mu*\nu$.
\end{proof}

When the measure $\nu$ is  compactly supported, we
have the following consequence of Theorem \ref{T3.1}.

\begin{cor}\label{C3.2} Let $\nu$ be a probability measure
on $\R^d$ with compact support such that $R:=\sup\{|z|: z\in {\rm
supp}\,\nu\}<\infty.$ If either $V\in C^1(\R^d)$ with
  \begin{equation}\label{C3.2.1}\liminf_{|x|\to\infty}
\ff{\<\nn V(x),x\>-R|\nn V(x)|}{|x|}>0,\end{equation}
or $V\in C^2(\R^d)$ and there is a constant $\delta\in(0,1)$ such that
  \begin{equation}\label{C3.2.2} \liminf_{|x|\to\infty}  \big(\delta|\nabla V(x)|^2-\DD V(x)\big)>0,\end{equation}
then $\mu*\nu$ satisfies the Poincar\'e inequality.\end{cor}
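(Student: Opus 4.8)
The plan is to deduce both cases of Corollary \ref{C3.2} directly from the corresponding conditions (1) and (2) of Theorem \ref{T3.1}, exploiting that $\nu$ is supported in $\{|z|\le R\}$, so that $\nu_x$ is supported there as well for every $x$. As a preliminary remark, valid in both cases, the compact support of $\nu$ together with $V\in C^1(\R^d)$ guarantees that $\Phi_\nu$ (and, when $V\in C^2$, also $\tt\Phi_\nu$) is well-defined and continuous, since the integrands $(\nn\e^{-V})(x-z)$ and $(\nn^2V)(x-z)$ are continuous and are integrated against a measure of compact support. Hence Theorem \ref{T3.1} is applicable once the relevant $\liminf$ condition is checked. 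The guiding idea is that replacing the argument $x$ by $x-z$ costs only a bounded shift because $|z|\le R$, and such bounded shifts become negligible after dividing by $|x|$ and letting $|x|\to\infty$.

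For the first case I would start from the identity $\<x,\nn V(x-z)\>=\<x-z,\nn V(x-z)\>+\<z,\nn V(x-z)\>$ and bound the last term below by $-R|\nn V(x-z)|$ using $|z|\le R$. This gives, pointwise in $z$ on ${\rm supp}\,\nu$,
$$\<x,\nn V(x-z)\>\ge \<x-z,\nn V(x-z)\>-R|\nn V(x-z)|=:g(x-z),$$
where $g(w)=\<w,\nn V(w)\>-R|\nn V(w)|$ is exactly the numerator in \eqref{C3.2.1}. By \eqref{C3.2.1} there is a constant $c>0$ with $g(w)\ge c|w|$ for all large $|w|$. Since $|x-z|\ge |x|-R$ for $z\in{\rm supp}\,\nu$, once $|x|$ is large enough (so that $|x|-R$ exceeds the threshold) one gets $g(x-z)\ge c|x-z|\ge c(|x|-R)$ uniformly in $z$, whence $\int g(x-z)\,\nu_x(\d z)\ge c(|x|-R)$. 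Dividing by $|x|$ and letting $|x|\to\infty$ yields $\liminf_{|x|\to\infty}|x|^{-1}\int\<x,\nn V(x-z)\>\nu_x(\d z)\ge c>0$, which is precisely condition (1) of Theorem \ref{T3.1}.

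For the second case the argument is even shorter. Setting $h(w)=\delta|\nn V(w)|^2-\DD V(w)$, condition \eqref{C3.2.2} supplies $c>0$ with $h(w)\ge c$ for all large $|w|$. Because $|x-z|\ge|x|-R$ on ${\rm supp}\,\nu$, for $|x|$ large we have $h(x-z)\ge c$ uniformly in $z$, so $\int h(x-z)\,\nu_x(\d z)\ge c$ and therefore $\liminf_{|x|\to\infty}\int\big(\delta|\nn V(x-z)|^2-\DD V(x-z)\big)\,\nu_x(\d z)\ge c>0$, which is condition (2) of Theorem \ref{T3.1}. Applying Theorem \ref{T3.1} in each case then gives the Poincar\'e inequality for $\mu*\nu$.

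The only mild point to handle carefully---rather than a genuine obstacle---is the uniformity in $z$ of these lower bounds: the threshold beyond which $g(w)\ge c|w|$ (resp. $h(w)\ge c$) must be converted into a threshold on $|x|$ through the deterministic inequality $|x-z|\ge|x|-R$, which holds simultaneously for every $z\in{\rm supp}\,\nu$ precisely because the support is bounded by $R$. I do not expect any analytic difficulty beyond this bookkeeping; the substance of the corollary lies entirely in Theorem \ref{T3.1}, and the compact support of $\nu$ serves only to replace the $\nu_x$-averaged conditions there by the pointwise conditions \eqref{C3.2.1} and \eqref{C3.2.2} on $V$ alone.
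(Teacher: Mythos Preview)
Your proposal is correct and follows essentially the same approach as the paper's own proof: both reduce to Theorem \ref{T3.1} by noting that compact support of $\nu$ makes $\Phi_\nu$ (and $\tt\Phi_\nu$) continuous, then in case (1) use the decomposition $\<x,\nn V(x-z)\>=\<x-z,\nn V(x-z)\>+\<z,\nn V(x-z)\>$ together with $|z|\le R$ and $|x-z|\ge |x|-R$, and in case (2) use that $\delta|\nn V|^2-\DD V$ is eventually bounded below while $|x-z|\ge |x|-R$. The paper's write-up differs only cosmetically (e.g., it splits the integral in case (2) over $\{|x-z|>r_1\}$ and its complement rather than invoking the uniform lower bound directly), but the substance is identical.
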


\begin{proof} Since the support of $\nu$ is compact,  the continuity of $\Phi_\nu$ when $V\in C^1(\R^d)$ and that of  $\tt\Phi_\nu$ when   $V\in C^2(\R^d)$ are obvious. Below we prove conditions (1) and (2) in Theorem \ref{T3.1} using (\ref{C3.2.1}) and (\ref{C3.2.2}) respectively.

(a) By \eqref{C3.2.1}  we obtain \beg{equation*}\beg{split} {\int_{\R^d}
\langle x, \nabla V(x-z)\rangle \nu_x(\d z)}&=\int_{\R^d}
\Big(\<x-z,\nn V(x-z)\> +\<z, \nn V(x-z)\>\Big)\nu_x(\d z)\\
&\ge \int_{\R^d} \Big(\<x-z,\nn V(x-z)\> -R|\nn
V(x-z)|\Big)\nu_x(\d z)\\
 &\ge \int_{\R^d} \big(c_1|x-z|-c_2\big)\nu_x(\d z) \\
 &\ge
c_1(|x|-R)^+ -c_2\end{split}\end{equation*}
 for some constants $c_1,c_2>0$. Then  condition (1) in Theorem \ref{T3.1} holds.

 (b) According to \eqref{C3.2.2}, there are constants $r_1, c_3$ and $c_4>0$
 such that for all $x\in\R^d$
\beg{equation}\label{D3}\beg{split}&\int_{\R^d} \Big(\delta|\nabla
V(x-z)|^2-\DD V(x-z)\Big) \, \nu_x(\d z)\\
&\ge
c_3\int_{\{|x-z|>r_1\}}\, \nu_x(\d z)-c_4\int_{\{|x-z|\le r_1\}}\,
\nu_x(\d z).\end{split}\end{equation} Since for $x\in\R^d$
with $|x|>R+r_1$ we have  $$\int_{\{|x-z|>r_1\}}\, \nu_x(\d z)\ge \int_{\{
|z|\le R\}}\, \nu_x(\d z)=1 $$ and
$$\int_{\{|x-z|\le r_1\}}\, \nu_x(\d z)\le \int_{\{|z|>R\}}\, \nu_x(\d
z)=0,$$   then (\ref{D3}) implies condition (2) in Theorem \ref{T3.1}.
\end{proof}

Finally, we present the following examples to illustrate  Theorem
\ref{T3.1} and Corollary \ref{C3.2}.

\begin{exa}\label{E3.3} $(1)$ Let $V(x)= c+|x|^p$ for some $p\ge 1$ and constant $c$ such that $\mu(\d x):= \e^{-V(x)}\d x$ is a
probability measure on $\R^d$.  Then   $\mu*\nu$ satisfies the Poincar\'{e} inequality for every compactly supported probability measure $\nu$ on $\R^d$.

$(2)$ Let $d=1$, $V(x)=c+ \sqrt{1+x^2}$ and
$$\nu(\d z)= \ff 1 \gg \sum_{i\in\Z} \e^{-|i|}\dd_i(\d z),\ \
\gg:=\sum_{i\in\Z}\e^{-|i|},$$ where $c=\log \int_{\R} \e^{-\sqrt{1+x^2}}\d x$ and  $\dd_i$ is the Dirac measure at
point $i$. Then $\mu*\nu$ satisfies the Poincar\'{e} inequality.
\end{exa}

\begin{proof}  Since when $p<2$ the function $V(x)=c+|x|^p$ is not in $C^2$ at point $0$, we take $\tilde{V}\in C^2(\R^d)$ such that
$\tilde{V}(x)=V(x)$ for $|x|\ge1$. Let $\tt\mu (\d x)= \tt C \e^{-\tt V(x)}\d x$, where $\tt C>0$ is a constant such that $\tt\mu$ is a probability measure. By the
stability of Poincar\'{e} inequality under the bounded perturbations
(e.g.\ see \cite[Proposition 17]{Ch}), it suffices to prove that $\tt\mu*\nu$ satisfies the Poincar\'e inequality.

In case (1) the assertion is  a direct consequence
of Corollary \ref{C3.2}. So, we only have to verify condition (1) in Theorem \ref{T3.1} for case (2).   For simplicity, we only
verify for  $x\to\infty,$  i.e.
\beq\label{3.1'} \lim_{x\to\infty} \frac{\int_{\R} x V'(x-z)\nu_x(\d
z)}{|x|} >0.\end{equation}
 Let $i_x$ be the integer
part of $x$, and $h_x=x-i_x.$ Note that for any $x>0$,
\beg{equation}\label{3.2'} \beg{split} \frac{\int_{\R} x V'(x-z)\nu_x(\d
z)}{|x|}&=\int_{\R} V' (x-z)\nu_x(\d z)\\
&=\frac{\sum_{i\in
\Z}\frac{x-i}{\sqrt{1+(x-i)^2}}\e^{-\sqrt{1+(x-i)^2}-|i|}}
{\sum_{i\in \Z} \e^{-\sqrt{1+(x-i)^2}-|i|}}\\
&=\frac{\sum_{k\in \Z}\frac{h_x+k}{\sqrt{1+(h_x+k)^2}}\e^{-\sqrt{1+(h_x+k)^2}-|i_x-k|}}
{\sum_{k\in \Z} \e^{-\sqrt{1+(h_x+k)^2}-|i_x-k|}}\\
&=:1-p_\nu(x)^{-1}\sum_{k\in\Z} (a_k b_k)(x),\end{split}\end{equation}
where \beg{equation*}\beg{split} &a_k(x):=\frac{\sqrt{1+(h_x+k)^2}-(h_x+k)}{\sqrt{1+(h_x+k)^2}},\\
&b_k(x):={\e^{-\sqrt{1+(h_x+k)^2}-|i_x-k|}},\ \
 p_\nu(x)= \sum_{k\in \Z} b_k(x).\end{split}\end{equation*} It is easy to see that
  $$0\le a_k(x)\le \beg{cases}  (1+k^2)^{-1/2}, &  k\ge 0,\\
  2, & k<0.\end{cases}$$ Then for any $n\ge 1$,
\beg{equation*}\beg{split} \sum_{k\in\Z} (a_k b_k)(x)&=\sum_{k\le 0}(a_k b_k)(x)+\sum_{k=1}^n(a_k b_k)(x)+ \sum_{k=n+1}^\infty a_k b_k(x)\\
&\le 2\sum_{k\le 0}b_k(x)+\sum_{k=1}^nb_k(x)+\ff 1 {n+1}\sum_{k=n+1}^\infty   b_k(x). \end{split}
\end{equation*}Thus,   for any $x>0$ and $1\le n\le i_x$,
\beg{equation*}\beg{split}
&\sum_{k\le 0}b_k(x)\le \e^{-x}+\sum_{k=-\infty}^{-1} \e^{-(-k-h_x)-(i_x-k)}
\le (2\e^2+1)\e^{-x},\\
&\sum_{k=1}^nb_k(x)\le n\e^{-x},\ \
 p_\nu(x)\ge \sum_{k=1}^{i_x}b_k(x)\ge i_x\e^{-x-1}.\end{split}\end{equation*} Then for any $n\ge 1$,
  $$\limsup_{x\to\infty} \ff 1 {p_\nu(x)} \sum_{k\in\Z} (a_kb_k)(x) \le \lim_{x\to\infty} \Big\{\ff{\e^{x+1}(2\e^2+1+n)\e^{-x}}{i_x}+\ff 1 {n+1}\Big\}=\ff 1{n+1}.$$ Letting $n\to\infty$ we obtain
$\lim_{x\to\infty} p_\nu(x)^{-1}\sum_{k\in\Z} (a_k b_k)(x)=0$. Combining this with (\ref{3.2'}) we prove  (\ref{3.1'}).
\end{proof}

\section{Super Poincar\'{e} Inequality}\label{sec4}

In this section we extend the results in Section 3 for the super Poincar\'e inequality.

\beg{thm}\label{T3.4} Let $\mu(\d x)=\e^{-V(x)}\d x$ be a
probability measure on $\R^d$ and let $\nu$ be a probability measure
on $\R^d$. Define $$\alpha(r,s)=(1+s^{-d/2})\frac{\Big(\sup_{|x|\le
r}\e^{-V(x)}\Big)^{d/2+1}}{\Big(\inf_{|x|\le
r}\e^{-V(x)}\Big)^{d/2+2}},\ \ s,r>0.$$
\beg{enumerate}
\item[$(1)$] If $V\in C^1(\R^d)$ such that
\begin{equation}\label{T3.4.1}\liminf_{|x|\to\infty} \frac{\int_{\R^d} \langle x, \nabla V(x-z)\rangle \nu_x(\d
z)}{|x|}=\infty, \end{equation} then $\mu*\nu$ satisfies the super
Poincar\'{e} inequality \eqref{sup-pin} with
$$\beta(r)=c\Big(1+\alpha(\psi(2/r), r/2)\Big)$$ for some constant $c>0$, where
$$\psi(r):=\inf\bigg\{s>0: \inf_{|x|\ge s}\frac{\int_{\R^d} \langle x, \nabla V(x-z)\rangle \nu_x(\d
z)}{|x|}\ge r\bigg\}<\infty,\ \ r>0.$$
\item[$(2)$] Suppose that $V\in C^2(\R^d)$ and there is a constant $\delta\in(0,1)$ such that
\begin{equation}\label{T3.4.2} \liminf_{|x|\to\infty}  \int_{\R^d}\! \Big(\delta|\nabla V(x-z)|^2-\DD V(x-z)\Big)
\, \nu_x(\d z)=\infty.\end{equation} Then, $\mu*\nu$ satisfies the
super Poincar\'{e} inequality \eqref{sup-pin} with
$$\beta(r)=c\Big(1+\alpha(\tt\psi (2/r), r/2)\Big)$$ for some constant $c>0$, where
$$\tt\psi(r):=\inf\bigg\{s>0: \inf_{|x|\ge s}
\int_{\R^d}\! \Big(\delta|\nabla V(x-z)|^2-\DD V(x-z)\Big)\, \nu_x(\d z)\ge r\bigg\}<\infty,\ \ r>0.$$
\end{enumerate}\end{thm}

The proof of Theorem \ref{T3.4} is based on the following lemma.

\begin{lem}\label{L3.5}Let $\mu(\d x)=\e^{-V(x)}\d x$ be a probability measure on $\R^d$.
Assume that there are functions $W\ge 1$, $\phi>0$ with
$\liminf_{|x|\to\infty}\phi(x)=\infty$ and constants $b,r_0>0$ such
that
$$\frac{\DD W- \<\nabla W, \nabla V\>}{W}\le
-\phi+b1_{B(0,r_0)}.$$ Then, the following super Poincar\'{e}
inequality holds
$$\mu_V(f^2)\le r \mu_V(|\nabla f |^2)+\beta(r) \mu_V(|f|)^2,$$
with   $$\beta(r)=c \Big(1+\alpha(\psi_\phi(2/r), r/2)\Big),\ \
r>0$$ for some constant $c >0$ and $$\psi_\phi(r):=\inf\big\{s>0:
\inf_{|x|\ge s}\phi(x)\ge r\big\}.$$
\end{lem}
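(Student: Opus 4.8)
The plan is to (i) turn the pointwise Lyapunov bound into an integrated functional inequality, (ii) split $\R^d$ into a far region, where $\phi$ is large, and a fixed ball, and (iii) treat the ball by a local super Poincar\'e inequality whose constant carries precisely the density factors appearing in $\alpha$. For the first step I would prove
\[\int_{\R^d}\phi f^2\,\d\mu\le\mu(|\nn f|^2)+b\int_{B(0,r_0)}f^2\,\d\mu,\qquad f\in C_b^1(\R^d).\]
Writing $L=\DD-\<\nn V,\nn\cdot\>$, which is symmetric in $L^2(\mu)$ with $\int(Lg)h\,\d\mu=-\int\<\nn g,\nn h\>\,\d\mu$, I would test with $g=W$ and $h=f^2/W$. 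Expanding $\nn(f^2/W)$ and using $2fW^{-1}\<\nn W,\nn f\>\le|\nn f|^2+f^2W^{-2}|\nn W|^2$ gives the standard estimate $\int(-LW/W)f^2\,\d\mu\le\mu(|\nn f|^2)$; since $W\ge1$ makes $f^2/W$ an admissible test function and the hypothesis reads $-LW/W\ge\phi-b1_{B(0,r_0)}$, the display follows (the integrability justifying the integration by parts comes from the usual cut-off and monotone approximation, using $W\ge1$).

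Next, fix the super Poincar\'e parameter $r>0$ and set $R:=\psi_\phi(2/r)$, so that $\phi\ge2/r$ on $\{|x|\ge R\}$. I would restrict to the regime $R\ge1\vee r_0$, which is the only nontrivial one: the complementary range of large $r$ is covered by the Poincar\'e inequality already implied by the displayed integral inequality together with $\liminf_{|x|\to\infty}\phi=\infty$. Splitting $\mu(f^2)=\int_{\{|x|\ge R\}}f^2\,\d\mu+\int_{B(0,R)}f^2\,\d\mu$ and bounding $f^2\le\ff r2\phi f^2$ on the far region, the integral inequality yields
\[\mu(f^2)\le\ff r2\,\mu(|\nn f|^2)+\Big(1+\ff{rb}2\Big)\int_{B(0,R)}f^2\,\d\mu .\]

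The local term is the heart of the matter. On the unit ball the (Neumann) Nash inequality gives the super Poincar\'e inequality $\int_{B(0,1)}g^2\,\d x\le s_0\int_{B(0,1)}|\nn g|^2\,\d x+c(1+s_0^{-d/2})\big(\int_{B(0,1)}|g|\,\d x\big)^2$; rescaling $x\mapsto Rx$ transfers this to $B(0,R)$ with Lebesgue coefficient $c(R^{-d}+s^{-d/2})$. Setting $M=\sup_{|x|\le R}\e^{-V}$ and $m=\inf_{|x|\le R}\e^{-V}$, I would use $\d\mu\le M\,\d x$ on the left and $\d x\le m^{-1}\d\mu$ on the right, applying the Lebesgue inequality with parameter $s'=ms/M$ so that the gradient coefficient becomes exactly $s$. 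The crucial point is that $(s')^{-d/2}=(M/m)^{d/2}s^{-d/2}$, which promotes the naive ratio $M/m^2$ to $M^{d/2+1}/m^{d/2+2}$ and so produces exactly the exponents in $\alpha$; the $R^{-d}$ term is dominated (for $R\ge1$) by $(M/m)^{d/2}$ and is absorbed. This gives
\[\int_{B(0,R)}f^2\,\d\mu\le s\int_{B(0,R)}|\nn f|^2\,\d\mu+c\,\alpha(R,s)\Big(\int_{B(0,R)}|f|\,\d\mu\Big)^2 .\]

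Finally I would insert this estimate into the previous display with $s$ of order $r$, chosen so that $(1+\ff{rb}2)s\le\ff r2$ (for instance $s=r/(rb+2)$, which for small $r$ is $\simeq r/2$), and bound the local gradient and $L^1$ integrals by the global ones. Collecting terms makes the coefficient of $\mu(|\nn f|^2)$ equal to $r$ and leaves $\beta(r)=c(1+\ff{rb}2)\alpha(R,s)$; absorbing the bounded prefactor and the discrepancy between $s$ and $r/2$ into the constant $c$ (and using the monotonicity of $\alpha$ in each argument) yields the claimed $\beta(r)=c(1+\alpha(\psi_\phi(2/r),r/2))$. I expect the main obstacle to be the local inequality: obtaining the exact exponents $d/2+1$ and $d/2+2$ forces one to run the density comparison \emph{through} the Nash parameter rather than applying it crudely, and to keep careful track of the scaling of the Neumann Nash constant on $B(0,R)$.
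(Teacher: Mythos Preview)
Your argument is correct and follows the same two ingredients as the paper's proof: a local super Poincar\'e inequality on balls with respect to $\mu$, obtained by density comparison from the Lebesgue one (this is where the exponents $d/2+1$ and $d/2+2$ in $\alpha$ arise, exactly as you trace), and the Lyapunov estimate to control the complement of the ball. The only difference is packaging: the paper quotes \cite[Proposition 3.1]{CGWW} for the Lebesgue local inequality and then invokes \cite[Theorem 2.10]{CGWW} to pass from the Lyapunov condition plus the local inequality to the super Poincar\'e inequality, whereas you carry out both steps by hand (the rescaling from $B(0,1)$, and the integration-by-parts identity $\int(-LW/W)f^2\,\d\mu\le\mu(|\nn f|^2)$ followed by the near/far splitting). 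One small point to tighten in your final step: since $s=r/(rb+2)\le r/2$, monotonicity of $\alpha$ in its second variable goes the wrong way; the absorption into $c$ works instead because $s^{-d/2}=(b+2/r)^{d/2}\le c_d\big(b^{d/2}+(2/r)^{d/2}\big)$, so $\alpha(R,s)\le c'\,\alpha(R,r/2)$ with $c'$ depending only on $d$ and $b$.
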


\begin{proof} It is well known that (e.g.\ see \cite[Proposition 3.1]{CGWW}) there exists a constant $C>0$ such that
for any $t, s>0$ and $f\in C^1(\R^d)$,
$$\int_{B(0,t)}f^2(x)\d x\le s \int_{B(0,t)}|\nabla f(x)|^2\d x+ C(1+s^{-d/2})\Big(\int_{B(0,t)}|f|(x)\d x\Big)^2.$$ Therefore,
\beg{equation*}\beg{split} \int_{B(0,t)}f^2(x)\mu_V(\d x)&\le
\Big(\sup_{|x|\le t}\e^{-V(x)}\Big)\int_{B(0,t)}f^2(x)\d x\\
&\le s \frac{\sup_{|x|\le t}\e^{-V(x)}}{\inf_{|x|\le t}\e^{-V(x)}
}\int_{B(0,t)}|\nabla f(x)|^2\mu_V(\d x)\\
&\quad + C(1+s^{-d/2}) \frac{\sup_{|x|\le
t}\e^{-V(x)}}{\Big(\inf_{|x|\le
t}\e^{-V(x)}\Big)^2}\Big(\int_{B(0,t)}|f|(x)\mu_V(\d x)\Big)^2\\
&\le s \frac{\sup_{|x|\le t}\e^{-V(x)}}{\inf_{|x|\le t}\e^{-V(x)} }
\mu_V(|\nabla f|^2)+C(1+s^{-d/2}) \frac{\sup_{|x|\le
t}\e^{-V(x)}}{\Big(\inf_{|x|\le t}\e^{-V(x)}\Big)^2}\mu_V(|f|)^2.
\end{split}\end{equation*}
Taking  $s=r\frac{\inf_{|x|\le t}\e^{-V(x)} }{\sup_{|x|\le
t}\e^{-V(x)}}$ in the inequality above, we arrive at that for any $t,
r>0$ and $f\in C^1(\R^d)$,
 $$\int_{B(0,t)}f^2(x)\mu_V(\d x)\le r \mu_V(|\nabla
 f|^2)+C\alpha(t,r)\mu_V(|f|)^2.$$
Thus, the proof is finished by \cite[Theorem 2.10]{CGWW} and
the fact that the function $\alpha(r,s)$ is increasing with respect
to $r$ and decreasing with respect to $s$.
\end{proof}

\begin{proof}[Proof of Theorem \ref{T3.4}] As the same to the proof of Theorem \ref{T3.1}, let $L_{\nu}=\Delta-\nabla V_\nu$.

In case (1), we consider a smooth function such that $W(x)=\e^{2|x|}$ for $|x|\ge1$ and $W(x)\ge 1$ for all $x\in \R^d$. We have
$$\frac{L_{\nu}W(x)}{W(x)}\le -\frac{\langle x, \nabla V_\nu(x)\rangle}{|x|} 1_{\{|x|\ge 1\}}+ b1_{\{|x|\le 1\}} $$ for some constant $b>0$. Then, the required assertion follows from Lemma \ref{L3.5} and the proof of Theorem \ref{T3.1}(1).

In case (2), we consider a smooth function such that $W(x)=\e^{(1-\delta)V(x)}$ for $|x|\ge 1$ and $W(x)\ge 1$ for all $x\in \R^d$. Then, $$\frac{L_{\nu}W(x)}{W(x)}\le -(1-\delta)\big(\Delta V(x)-\delta|\nabla V(x)|^2\big)+ b1_{\{|x|\le 1\}} $$ for some constant $b>0$. This along with Lemma \ref{L3.5} and the proof of Theorem \ref{T3.1}(2) also yields the desired assertion.  \end{proof}

According to the proof of Corollary \ref{C3.2}, when the measure $\nu$ has the compact support, we can obtain the following statement from Theorem \ref{T3.4}.

\begin{cor}\label{C3.6} Let $\nu$ be a probability measure
on $\R^d$ with compact support such that $R:=\sup\{|z|: z\in {\rm
supp}\,\nu\}<\infty.$  \beg{enumerate}
\item[$(1)$] If \begin{equation}\label{C3.6.1}\liminf_{|x|\to\infty}
\ff{\<\nn V(x),x\>-R|\nn V(x)|}{|x|}=\infty,\end{equation} then
$\mu*\nu$ satisfies the super Poincar\'{e} inequality
\eqref{sup-pin} with $$\beta(r)=c \Big(1+\alpha(\psi(2/r),
r/2)\Big) $$ for some constant $c>0$, where
$$\psi(r):=\inf\bigg\{s>0: \inf_{|x|\ge 2s}
\ff{\<\nn V(x),x\>-R|\nn V(x)|}{|x|}\ge r\bigg\}.$$
\item[$(2)$] If there is a constant $\delta\in(0,1)$ such that
  \begin{equation}\label{C3.6.2} \liminf_{|x|\to\infty}  \big(\delta|\nabla V(x)|^2-\DD V(x)\big)=\infty,\end{equation}
   \end{enumerate} then $\mu*\nu$ satisfies the super Poincar\'{e} inequality \eqref{sup-pin} with
   $$\beta(r)=c \Big(1+\alpha(\tt\psi(2/r), r/2)\Big) $$ for some constant $c>0$, where
$$\tt\psi(r):=\inf\bigg\{s>0: \inf_{|x|\ge 2s}
\big(\delta|\nabla V(x)|^2-\DD V(x)\big)\ge r\bigg\}.$$
\end{cor}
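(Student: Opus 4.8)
The plan is to deduce Corollary \ref{C3.6} directly from Theorem \ref{T3.4} by showing that the compact support hypothesis on $\nu$ reduces the integral conditions \eqref{T3.4.1} and \eqref{T3.4.2} to the pointwise conditions \eqref{C3.6.1} and \eqref{C3.6.2}, and by controlling the rate functions $\psi$ and $\tt\psi$. The starting observation is that when $R:=\sup\{|z|: z\in{\rm supp}\,\nu\}<\infty$, the measure $\nu_x(\d z)$ is supported in $\{|z|\le R\}$ for every $x$, so each integrand may be estimated uniformly over the support. In particular, continuity of $\Phi_\nu$ and (in part (2)) of $\tt\Phi_\nu$ is automatic, exactly as noted at the start of the proof of Corollary \ref{C3.2}, so Theorem \ref{T3.4} is applicable once the $\liminf$ conditions are checked.

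For part (1), I would follow the computation in step (a) of the proof of Corollary \ref{C3.2} verbatim: writing $\langle x,\nabla V(x-z)\rangle=\langle x-z,\nabla V(x-z)\rangle+\langle z,\nabla V(x-z)\rangle$ and using $|z|\le R$ on the support of $\nu_x$ gives
$$\int_{\R^d}\langle x,\nabla V(x-z)\rangle\,\nu_x(\d z)\ge \int_{\{|z|\le R\}}\Big(\langle x-z,\nabla V(x-z)\rangle-R|\nabla V(x-z)|\Big)\nu_x(\d z).$$
Now \eqref{C3.6.1} says that $\big(\langle\nabla V(y),y\rangle-R|\nabla V(y)|\big)/|y|\to\infty$ as $|y|\to\infty$; since $|x-z|\ge|x|-R$ on the support, the integrand tends to infinity uniformly in $z$, which forces the full integral divided by $|x|$ to diverge, giving \eqref{T3.4.1}. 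The only extra bookkeeping beyond Corollary \ref{C3.2} is to relate the rate function $\psi$ appearing in Corollary \ref{C3.6} to the one in Theorem \ref{T3.4}: the shift from $|x|$ to $|x-z|\ge|x|-R$ accounts for the factor $2s$ in the definition of $\psi$ here (as opposed to $s$ in Theorem \ref{T3.4}), and since $\beta$ depends on $\psi$ only through the increasing function $\alpha$, it suffices that the stated $\psi$ dominates the one produced by the theorem up to the harmless constant $c$.

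For part (2), I would mirror step (b) of Corollary \ref{C3.2} together with the growth-to-infinity bookkeeping of part (1). From \eqref{C3.6.2} choose, for each $M>0$, a radius $r_M$ with $\delta|\nabla V(y)|^2-\DD V(y)\ge M$ for $|y|\ge r_M$. For $|x|>R+r_M$ the support constraint $|z|\le R$ forces $|x-z|\ge r_M$ throughout, so $\delta|\nabla V(x-z)|^2-\DD V(x-z)\ge M$ on the whole support of $\nu_x$, whence $\int_{\R^d}\big(\delta|\nabla V(x-z)|^2-\DD V(x-z)\big)\nu_x(\d z)\ge M$; letting $|x|\to\infty$ (so $M\to\infty$) yields \eqref{T3.4.2}. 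Theorem \ref{T3.4}(2) then applies and produces the claimed $\beta$, again with $\tt\psi$ adjusted by the factor two coming from $|x-z|\ge|x|-R$ and absorbed into the monotone $\alpha$.

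The step I expect to require the most care is not any single estimate but the matching of the rate functions $\psi,\tt\psi$ between the corollary and the theorem. The analytic content—that compact support lets one replace averaged conditions by pointwise ones—is routine once one exploits $|z|\le R$, and it is genuinely parallel to what was already done in Corollary \ref{C3.2}. The subtlety is purely quantitative: one must verify that the function $\psi$ defined via $|x|\ge 2s$ in the corollary is large enough to bound the function defined via $|x|\ge s$ in the theorem after the argument shift by $R$, so that the resulting super Poincar\'e profile $\beta(r)=c\big(1+\alpha(\psi(2/r),r/2)\big)$ is legitimate. This reduces to checking monotonicity of $\psi$ and using that $\alpha(r,s)$ is increasing in $r$, exactly the property invoked at the end of the proof of Lemma \ref{L3.5}; hence no new idea is needed, only a clean comparison of the two definitions.
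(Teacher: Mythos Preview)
Your proposal is correct and matches the paper's approach: the paper simply states that ``The proof of Corollary \ref{C3.6} is similar to that of Corollary \ref{C3.2}, and is thus omitted,'' and you have carried out precisely that reduction, including correctly identifying that the only additional work beyond Corollary \ref{C3.2} is the quantitative comparison of the rate functions $\psi,\tt\psi$ with those of Theorem \ref{T3.4}. Your observation that this comparison rests on the monotonicity of $\alpha$ in its first argument (as used at the end of the proof of Lemma \ref{L3.5}) is exactly the mechanism the paper relies on implicitly.
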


The proof of Corollary \ref{C3.6} is similar to that of Corollary \ref{C3.2}, and is thus omitted.
 Finally,   we consider the following example to illustrate   Corollary \ref{C3.6}.

\begin{exa}\label{E3.7} Let $V(x)=c+|x|^p$ for some $p>1$ and $c\in\R$ such that $\mu(\d x):= \e^{-V(x)}\d x$ is a probability measure on $\R^d$.  Then for any compactly supported probability measure $\nu$, there exists a constant $c>0$ such that $\mu*\nu$ satisfies the super Poincar\'{e} inequality \eqref{sup-pin} with
\beq\label{WFJ} \beta(r)=\exp\Big(cr^{-\frac{p}{2(p-1)}}\Big),\ \ r>0.\end{equation}
\end{exa}

\begin{proof}  Since by \cite[Corollary 1.2]{W13} the super Poincar\'e inequality implies the Poincar\'e inequality, we may take $\bb(r)=1$ for large $r>0$. So, it suffices to prove the assertion for small $r>0.$                                                                                                     As explained in the proof of Example \ref{E3.3}
up to a bounded perturbation, we may simply assume that $V\in C^2(\R^d)$.   For any $\delta\in(0,1)$ and any $x\in\R^d$ with $|x|$ large enough,
$$\delta|\nabla V(x)|^2-\DD V(x)\ge \eta (V(x)),$$
where $\eta$ is a non-decreasing function such that $\eta(r)=\delta r^{2(p-2)/p}$ for some constant $\dd>0$ and all $r\ge 1$.  So,
$$\tt \psi(u) \le c_1 \big(1+u^{\ff 1 {2(p-2)}}\big),\ \ \ u>0$$ holds for some constant $c_1>0$. Next, it is easy to see that
$$\aa(r,s) \le c_2(1+s^{-d/2}) \e^{c_2 r^p},\ \ s,r>0$$ holds for some constant $c_2>0$. Therefore, the desired assertion for small $r>0$ follows from   Corollary \ref{C3.6}(2).  \end{proof}

\paragraph{Remark 4.1}  (1) By letting $\nu=\dd_0$ we have $\mu=\mu*\nu$. So,   Example \ref{E3.7} implies that
$\mu$ satisfies the super Poincar\'e inequality with $\bb$ given in \eqref{WFJ} for some constant $c>0$, and moreover,  the inequality   is stable under convolutions of compactly supported probability measures. It is easy to see from \cite[Theorem 6.2]{W00a} that the rate function $\bb$ given in \eqref{WFJ} is sharp, i.e. $\mu*\nu$ does not satisfy the super Poincar\'e inequality with $\bb$  such that $\lim_{r\downarrow 0} r^{\ff p{2(p-1)}} \log \bb(r)=0.$

(2) On the other hand, however, if $\nu$ has worse concentration property, $\mu*\nu$ may only satisfy a weaker functional inequality. For instance,
let $\mu$ be in Example \ref{E3.7} but $\nu(\d z)= C\e^{-|z|^q}\d z$ for some constant $q\in (1,p)$ and normalization constant $C>0$. As explained in Remark 4.1(1) for $q$ in place $p$ we see that $\nu$ satisfies the super Poincar\'e inequality with
\beq\label{DGG} \beta(r)=\exp\Big(cr^{-\frac{q}{2(q-1)}}\Big),\ \ r>0\end{equation}  for some constant $c>0$.  Combining this with the super Poincar\'e inequality for $\mu$ with $\bb$ given in \eqref{WFJ}, from Proposition \ref{P1.1} we conclude that $\mu*\nu$ also satisfies the super Poincar\'e inequality with $\bb$ given in \eqref{DGG} for some (different)  constant $c>0$, which is sharp   according to \cite[Theorem 6.2]{W00a} as explained above.   However, it is less straightforward to verify this super Poincar\'e inequality for $\mu*\nu$ using Theorem \ref{T3.4} instead of Proposition \ref{P1.1}.

\paragraph{\bf Acknowledgement.} We would like to thank  the referee for helpful comments.

\end{document}